\newtheorem{thm}{Theorem}[section]
\newtheorem{lem}[thm]{Lemma}
\newtheorem{prop}[thm]{Proposition}
\theoremstyle{definition}
\newtheorem{rem}[thm]{Remark}
\numberwithin{equation}{section}
\begin{document}

\title{\Large Some inverse results of sumsets
}\author{\large Min Tang\thanks{Corresponding author. This work was supported by the National Natural Science Foundation of China(Grant
No. 11971033).} and Yun Xing}
\date{} \maketitle
 \vskip -3cm
\begin{center}
\vskip -1cm { \small
\begin{center}
 School of Mathematics and Statistics, Anhui Normal
University
\end{center}
\begin{center}
Wuhu 241002, PR China
\end{center}}
\end{center}

  {\bf Abstract:} Let $h\geq 2$ and $A=\{a_0,a_1,\ldots,a_{k-1}\}$ be a finite set of integers. It is well-known that $\left|hA\right|=hk-h+1$ if and only if $A$ is a $k$-term arithmetic progression.
  In this paper, we give some nontrivial inverse results of the sets $A$ with some extremal
 the cardinalities of $hA$.

{\bf Keywords:} sumsets; inverse problem

2010 Mathematics Subject Classification: 11B13\vskip8mm

\section{Introduction}

Let $[a,b]$ denote the interval of integers $n$ such that $a\leq n\leq b$. Let $A=\{a_0,a_1,\ldots,a_{k-1}\}$ be a finite set of integers such that
$a_0<a_1<\ldots<a_{k-1}$, we define
$$d(A)=\gcd(a_1-a_0, a_2-a_0,\ldots,a_{k-1}-a_0).$$
Let $a_i'=(a_i-a_0)/d(A),\; i=0,1,\ldots,k-1$. We call
$$A^{(N)}=\{a_0',a_1',\ldots,a_{k-1}'\}$$
the normal form of the set $A$. For any integer $c$, we define the set
$$c+A=\{c+A: a\in A\}.$$
For any finite set of integers $A$ and any positive integer $h\geq 2$, let $hA$ be the set of all sums of $h$ elements of $A$, with repetitions allowed.
It is easy to see that $\left|hA\right|=\left|hA^{(N)}\right|$.
For given set $A$, a direct problem is to determine the structure and properties of the $h$-fold sumset $hA$ when the set $A$ is known. An inverse problem is to deduce properties of the set $A$ from properties of the sumset $hA$.

The following two results gave the simple lower bound of the cardinality of $hA$ and showed that the lower bound is attained if and only if the set is an arithmetic progression.

\noindent{\bf Theorem A}(\cite{Nathanson96}, Theorem 1.3) Let $h\geq 2$. Let $A$ be a finite set of integers with $\left|A\right|=k$. Then
$$\left|hA\right|\geq hk-h+1.$$

\noindent{\bf Theorem B}(\cite{Nathanson96}, Theorem 1.6). Let $h\geq 2$. Let $A$ be a finite set of integers with $\left|A\right|=k$. Then $\left|hA\right|=hk-h+1$ if and only if $A$ is a $k$-term arithmetic progression.

In 1959, Freiman \cite{Freiman} proved the following result:

\noindent{\bf Theorem C} Let $k\geq 3$. Let $A=\{a_{0}, a_{1}, \ldots, a_{k-1}\}$ be a set of integers such that
$0 = a_{0} < a_{1} < \cdots < a_{k-1}$ and $\gcd(A)=1$. Then
\begin{displaymath}
\displaystyle\left|2A\right|\geq \min \{a_{k-1}, 2k-3\}+k=
\begin{cases}
a_{k-1}+k, \quad&\; \text{ if } a_{k-1}\leq 2k-3,\\
3k-3,\quad&\; \text{ if } a_{k-1}\geq 2k-2.
\end{cases}
\end{displaymath}
In \cite{chen1997}, \cite{Freiman62}, \cite{Lev1995}, \cite{Stanchescu}, the authors generalized the above theorem to the case of summation of two distinct sets.
In 1959, Freiman \cite{Freiman}(see also \cite{Nathanson96}) investigated the structure of set $A$ if the cardinality of $2A$
is between $2k-1$ and $3k-4$.

\noindent{\bf Theorem D}(\cite{Nathanson96}, Theorem 1.16) Let $A$ be a set of integers such that $\left|A\right|=k\geq 3$. If
$\left|2A\right|=2k-1+b\leq 3k-4,$
then $A$ is a subset of an arithmetic progression of length $k+b\leq 2k-3$.

In 1996, Lev \cite{Lev1996} gave the following result:

\noindent{\bf Theorem E}(\cite{Lev1996}, Theorem 1) Let $h$, $k\geq 2$ be integers. Let $A=\{a_{0}, a_{1}, \ldots, a_{k-1}\}$ be a set of integers such that
$0 = a_{0} < a_{1} < \cdots < a_{k-1}$ and $\gcd(A)=1$. Then
$$\left|hA\right|\geq\left|(h-1)A\right|+\min\{a_{k-1},h(k-2)+1\}.$$
For other related problems, see \cite{Freiman0901}-\cite{Lev2000}, \cite{Nathanson95}-\cite{Nathanson}, \cite{Yang}.

In this paper, we consider the following inverse problem: assume that $A$ is a finite integer set and the cardinalities of $hA$ are extremal cases, how to determine the structure of the set $A$?
We obtain the following results:

\begin{thm}\label{thm1} Let $h\geq 2$ and $k\geq 5$ be integers. Let $A$ be an integer set with $\left|A\right|=k$. If $hk-h+1<|hA|\leq hk+h-2$, then
$$A^{(N)}=[0,k]\setminus \{i\}, \quad 1\leq i\leq k-1.$$
Moreover, $|hA|=hk$ for $i=1$ or $k-1$, and $|hA|=hk+1$ for $2\leq i\leq k-2$.

\end{thm}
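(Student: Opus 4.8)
The plan is to pass to the normal form, pin down the diameter $a_{k-1}$ by iterating Theorem~E, and then compute $|hA|$ directly for each surviving candidate. Since $|hA|=|hA^{(N)}|$ I would assume from the start that $A=A^{(N)}=\{0=a_0<a_1<\cdots<a_{k-1}\}$ with $\gcd(A)=1$. Because $|hA|>hk-h+1$, Theorem~B shows that $A$ is not an arithmetic progression; as $A$ is a set of $k$ distinct nonnegative integers containing $0$, this already forces $a_{k-1}\geq k$ (the only alternative, $a_{k-1}=k-1$, would make $A=[0,k-1]$).

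The next step is to prove $a_{k-1}=k$. Suppose $a_{k-1}\geq k+1$. Iterating Theorem~E from $h$ down to $2$ yields
$$|hA|\geq |A|+\sum_{j=2}^{h}\min\{a_{k-1},\,j(k-2)+1\}\geq k+(h-1)\min\{a_{k-1},\,2k-3\},$$
since for $j\geq 2$ each summand is at least the $j=2$ term. If $a_{k-1}\geq 2k-3$, the right-hand side equals $k+(h-1)(2k-3)=hk+h-1+(h-1)(k-4)\geq hk+h-1$; and if instead $k+1\leq a_{k-1}\leq 2k-4$ --- a range which is nonempty precisely because $k\geq 5$ --- it is at least $k+(h-1)(k+1)=hk+h-1$. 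Either way $|hA|\geq hk+h-1$, contradicting $|hA|\leq hk+h-2$. Hence $a_{k-1}=k$, and since $|A|=k$ with $0,k\in A$ we conclude $A^{(N)}=[0,k]\setminus\{i\}$ for a unique $i$ with $1\leq i\leq k-1$.

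It then remains to evaluate $|hA|$ for $A=[0,k]\setminus\{i\}$. For $i=1$, write $A=\{0\}\cup[2,k]$; grouping the $h$ summands according to how many of them are nonzero, and using that the sum of $j$ elements of the interval $[2,k]$ is exactly the interval $[2j,jk]$, one gets $hA=\{0\}\cup\bigcup_{j=1}^{h}[2j,jk]=\{0\}\cup[2,hk]$, because the blocks for $j\geq1$ overlap consecutively while $1$ belongs to none of them; thus $|hA|=hk$. The case $i=k-1$ reduces to $i=1$ via the identity $h(k-A)=hk-hA$, which carries $[0,k]\setminus\{k-1\}$ to $[0,k]\setminus\{1\}$. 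For $2\leq i\leq k-2$, both $[0,i-1]$ and $[i+1,k]$ contain at least two elements, and a short computation gives
$$2A=[0,2i-2]\cup[i+1,k+i-1]\cup[2i+2,2k]=[0,2k];$$
since $0,k\in A$ we then have $hA=2A+(h-2)A\supseteq[0,2k]+\{0,k,\ldots,(h-2)k\}=[0,hk]$, so $hA=[0,hk]$ and $|hA|=hk+1$.

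The main obstacle is the middle step, namely establishing $a_{k-1}=k$: one has to squeeze the iterated Theorem~E estimate against the ceiling $hk+h-2$ uniformly in $h\geq 2$ and $k\geq 5$, and this is exactly the place where the hypothesis $k\geq 5$ is needed (to keep the range $k+1\leq a_{k-1}\leq 2k-4$ under control). Once the diameter has been fixed, the rest is routine interval bookkeeping, the only care being to treat the boundary cases $i=1,k-1$ separately from the interior ones.
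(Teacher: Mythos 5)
Your proof is correct and follows essentially the same route as the paper: you iterate Theorem~E to force $a_{k-1}\leq k$ (this is exactly the content of the paper's Lemma~2.1), conclude $A^{(N)}=[0,k]\setminus\{i\}$, and then compute $|hA|$ case by case as in Proposition~3.1. The only cosmetic difference is in the interior case $2\leq i\leq k-2$, where you show $2A=[0,2k]$ and then use $hA\supseteq 2A+(h-2)\{0,k\}$, whereas the paper fills the gap around $hi$ via the $h$-fold sumset of $\{i-2,i-1\}\cup\{i+1,i+2\}$ (Lemma~2.2); both are equivalent interval bookkeeping.
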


\begin{thm}\label{thm2} Let $h\geq 2$ and $k\geq 5$ be integers. Let $A$ be an integer set with $\left|A\right|=k$. If $hk+h-2<|hA|\leq hk+2h-3$, then
$$A^{(N)}=[0,k+1]\setminus \{i,j\}\quad 1\leq i<j\leq k+1.$$
Moreover, we have

\noindent (a) $|hA|=hk+h-1$ for $\{i,j\}=\{1,2\}, \{k-1,k\},\{1,k\},\{1,3\},\{k-2,k\};$

\noindent (b) $|hA|=hk+h$ for $i=1$ and $4\leq j\leq k-1$ when $h\geq 2$; or $2\leq i\leq k-3$ and $j=k$ when $h\geq 2$, or $\{i,j\}=\{2,3\}, \{k-2,k-1\}$ when $h=2$;

\noindent (c) $|hA|=hk+h+1$ for $2\leq i<j\leq k-1$, except for $\{i,j\}=\{2,3\}, \{k-2,k-1\}$ when $h=2$.
\end{thm}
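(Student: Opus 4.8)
The plan is to first reduce to the normal form and pin down the largest element, then classify. Replacing $A$ by $A^{(N)}$, we may assume $a_0=0$, $\gcd(A)=1$, $A\subseteq[0,m]$ with $m=a_{k-1}\ge k-1$, and $|hA|=|hA^{(N)}|$. Since $hk+h-2\ge hk-h+1$ for $h\ge2$, the hypothesis together with Theorem~B shows $A$ is not an arithmetic progression, so $m\ge k$. Next I would force $m\le k+1$: assume $m\ge k+2$ and derive $|hA|\ge hk+2h-2$, contradicting $|hA|\le hk+2h-3$. For $h=2$, Theorem~C gives $|2A|\ge\min\{m,2k-3\}+k\ge(k+2)+k=2k+2$, using $m\ge k+2$ and $2k-3\ge k+2$ (valid since $k\ge5$). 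For $h\ge3$, Theorem~E gives $|hA|\ge|(h-1)A|+\min\{m,h(k-2)+1\}$ with $\min\{m,h(k-2)+1\}\ge k+2$; combined with $|(h-1)A|\ge(h-1)k+2(h-1)-2$, obtained by induction on $h$ (for the same set $A$, base case $h=2$ above), this gives the claim. Hence $m\in\{k,k+1\}$. When $m=k$ we have $A^{(N)}=[0,k]\setminus\{i\}=[0,k+1]\setminus\{i,k+1\}$ and, by Theorem~\ref{thm1}, $|hA|\in\{hk,hk+1\}$; the hypothesis forces $|hA|=hk+1$, hence $h=2$ and $2\le i\le k-2$, with value $|2A|=2k+1=hk+h-1$.

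It remains to treat $m=k+1$, so $A^{(N)}=[0,k+1]\setminus\{i,j\}$ with $1\le i<j\le k$. In normal form with largest element $k+1$, the set $A^{(N)}$ is neither a $k$-term arithmetic progression (which in normal form is $[0,k-1]$) nor of the shape $[0,k]\setminus\{i'\}$ (whose largest element is $k$); so Theorem~\ref{thm1} and Theorem~B give $|hA|>hk+h-2$, that is $|hA|\ge h(k+1)-1$, while $hA\subseteq[0,h(k+1)]$ gives $|hA|\le h(k+1)+1$. Thus $hA$ misses at most two points of $[0,h(k+1)]$, and parts (a)--(c) amount to saying exactly which points are missing.

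To locate the missing points I would exploit the reflection $x\mapsto(k+1)-x$, which carries $A$ to $[0,k+1]\setminus\{k+1-j,k+1-i\}$ and matches ``$v\in hA$'' with ``$h(k+1)-v\in h(\text{reflected }A)$'', so that it suffices to (i) decide which of $0,1,2,3$ lie in $hA$, and (ii) prove the filling statement that $v\in hA$ for all $v$ with $4\le v\le h(k+1)-4$. Step (i) is a finite check: listing the representations $v=x_1+\cdots+x_h$ with all $x_\ell\in[0,k+1]$ and asking when deleting $i$ and $j$ kills every one of them, one gets $0\in hA$ always, $1\in hA\iff i\ne1$, $2\in hA$ unless $\{i,j\}=\{1,2\}$, and $3\in hA$ unless $\{i,j\}=\{1,3\}$ or else $h=2$ and $\{i,j\}=\{2,3\}$. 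For step (ii) I would induct on $h$: the base $h=2$ follows by counting, for each $v\in[4,2k-2]$, the unordered pairs $\{x,v-x\}\subseteq[0,k+1]$ with sum $v$ (there are at least three once $k\ge5$), since removing the two points $i,j$ destroys at most two of them; for $h\ge3$ one uses $hA\supseteq(h-1)A+\{0,k+1\}$ and the interval furnished by the inductive hypothesis, which overlaps its own translate by $k+1$ as soon as $k\ge7$, leaving only the pairs $(h,k)$ with $k\in\{5,6\}$ to be handled by hand. Putting (i), (ii) and the reflection together, for each admissible $\{i,j\}$ one reads off that the missing set has size $2$ exactly for the configurations of (a), size $1$ exactly for those of (b), and is empty exactly in case (c), so that $|hA|$ equals $hk+h-1$, $hk+h$ and $hk+h+1$ respectively.

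The step I expect to be the main obstacle is the filling statement (ii): showing $[4,h(k+1)-4]\subseteq hA$ uniformly in the two deleted points. Its inductive proof runs smoothly once the interval available for $(h-1)A$ is long enough to meet its shift by $k+1$, which holds for $k\ge7$ (and for $k=6$ when $h\ge3$); but for $k=5$ the crude bound genuinely leaves gaps, and closing them requires sharper membership facts, for instance that $2k-1\in2A$ unless $\{i,j\}=\{k-1,k\}$, in a somewhat tedious finite analysis. A smaller point of care, and the reason for the ``$h=2$'' exceptions in (b) and (c), is that whether $3\in hA$, and dually whether $h(k+1)-3\in hA$, really does depend on whether $h=2$ or $h\ge3$.
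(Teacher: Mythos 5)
Your proposal is essentially correct but follows a genuinely different route from the paper. The paper also first pins down $a_{k-1}\le k+1$ (its Lemma 2.1, via Theorem E alone), but then determines $|hA^{(N)}|$ for every pair $\{i,j\}$ by explicit, case-by-case constructions of representations (Propositions 3.2--3.4, split according to $j-i=1$, $j-i=2$, $j-i\ge 3$). You instead observe that $|hA|\ge hk+h-1$ (via Theorems A, B and Theorem 1.1) together with $hA\subseteq[0,h(k+1)]$ leaves at most two missing points, then locate them by a finite check of $0,1,2,3$, a filling lemma $[4,h(k+1)-4]\subseteq hA$, and the reflection $x\mapsto(k+1)-x$. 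This is more systematic and less error-prone than the paper's many hand-built representations; your base case ($\ge 3$ unordered pairs for each $v\in[4,2k-2]$, of which the two deleted elements kill at most two) and the inductive step $hA\supseteq(h-1)A\cup((h-1)A+(k+1))$ both check out, and in fact the only pair $(h,k)$ the induction misses is $(3,5)$, where the single gap $9\in 3A$ is immediate (so your flagged residue for $k\in\{5,6\}$ is real but smaller than you fear and closes easily). Two caveats: your illustrative auxiliary fact is misstated --- $2k-1=(k-2)+(k+1)=(k-1)+k$, so $2k-1\notin 2A$ exactly when $\{i,j\}$ is $\{k-2,k-1\}$ or $\{k-2,k\}$, not $\{k-1,k\}$ --- though this does not affect the architecture; and your careful treatment of the subcase $a_{k-1}=k$ correctly shows that for $h=2$ the sets $A^{(N)}=[0,k+1]\setminus\{i,k+1\}$ with $2\le i\le k-2$ satisfy $|2A|=2k+1=hk+h-1$ and hence belong in case (a), a family that the paper's list in (a) omits (even though its own Proposition 3.4(4) computes exactly this value). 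So your argument is not only a valid alternative but slightly more complete than the paper's.
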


\begin{rem} By Theorem \ref{thm1} and Theorem \ref{thm2} we know that there is no set $A$ such that $|3A|=3k-1$.
\end{rem}
\section{Lemmas}

\begin{lem}\label{lem1}Let $h\geq 2$ and $k\geq 5$ be integers. Let $A=\{a_{0}, a_{1}, \ldots, a_{k-1}\}$ be a set of integers such that
$0 = a_{0} < a_{1} < \cdots < a_{k-1}$ and $\gcd(A)=1$.
 If $|hA|\leq hk+2h-3$, then $a_{k-1}\leq k+1$. Moreover,
 if $|hA|\leq hk+h-2$, then $a_{k-1}\leq k$.
\end{lem}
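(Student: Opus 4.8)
The plan is to get a lower bound for $|hA|$ by iterating Theorem E down the chain $A, 2A, \ldots, hA$, and then to contradict the hypothesis whenever $a_{k-1}$ exceeds the claimed bound. Since $a_0=0$ and $\gcd(A)=1$, Theorem E applies to $A$, and applied with $h$ replaced by each $j$ in the range $2\leq j\leq h$ it gives
$$|jA|\geq |(j-1)A|+\min\{a_{k-1},\,j(k-2)+1\},$$
with the understanding that $|1\cdot A|=|A|=k$ (the case $j=2$ is precisely Theorem C, so no extra input is needed). Telescoping this chain of inequalities yields
$$|hA|\;\geq\; k+\sum_{j=2}^{h}\min\{a_{k-1},\,j(k-2)+1\},$$
a sum of $h-1$ terms, each of which I will bound below once a lower bound on $a_{k-1}$ is assumed for contradiction.

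For the second (sharper) assertion, suppose $a_{k-1}\geq k+1$. Because $k\geq 5$, every index $j\geq 2$ satisfies $j(k-2)+1\geq 2(k-2)+1=2k-3\geq k+1$, so each summand equals at least $\min\{k+1,\,2k-3\}=k+1$; hence $|hA|\geq k+(h-1)(k+1)=hk+h-1$, which contradicts $|hA|\leq hk+h-2$. Therefore $a_{k-1}\leq k$. For the first assertion, suppose instead $a_{k-1}\geq k+2$. Using $k\geq 5$ once more we have $2k-3\geq k+2$, so each summand is at least $\min\{k+2,\,2k-3\}=k+2$, whence $|hA|\geq k+(h-1)(k+2)=hk+2h-2>hk+2h-3$, contradicting the hypothesis. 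Thus $a_{k-1}\leq k+1$.

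The argument is essentially bookkeeping once the telescoped form of Theorem E is written down, so I do not expect a genuine obstacle; the only points requiring care are (i) checking that Theorem E is legitimately applicable, which it is since $0=a_0$ and $\gcd(A)=1$ are exactly its hypotheses, and (ii) verifying the elementary inequalities $2k-3\geq k+2$ and $2k-3\geq k+1$, both consequences of $k\geq 5$, which guarantee that in every term of the sum the minimum is controlled by the assumed size of $a_{k-1}$ rather than by $j(k-2)+1$. The boundary case $h=2$ is automatically included, since then the sum has its single term $\min\{a_{k-1},2k-3\}$ and the bound reduces directly to Theorem C.
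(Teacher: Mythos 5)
Your proof is correct and follows essentially the same route as the paper: iterate Theorem E, telescope to get $|hA|\geq k+\sum_{j=2}^{h}\min\{a_{k-1},j(k-2)+1\}$, and use $k\geq 5$ to control the minima. The only (cosmetic) difference is that the paper first shows $a_{k-1}\leq 2k-3$ so that every minimum equals $a_{k-1}$ and then solves $hk+2h-3\geq k+(h-1)a_{k-1}$, whereas you argue directly by contradiction from $a_{k-1}\geq k+2$ (resp.\ $k+1$); both are sound.
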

\begin{proof} By Theorem E, we have \begin{eqnarray}\label{2.1}\left|hA\right|&\geq&\left|(h-1)A\right|+\min\{a_{k-1},h(k-2)+1\}\\
&\geq&\left|(h-2)A\right|+\min\{a_{k-1},h(k-2)+1\}+\min\{a_{k-1},(h-1)(k-2)+1\}\nonumber\\
&\geq& \cdots\cdots\nonumber\\
&\geq&\left|A\right|+\min\{a_{k-1},h(k-2)+1\}+\cdots+\min\{a_{k-1},2(k-2)+1\}.
\nonumber
\end{eqnarray}

 If $|hA|\leq hk+2h-3$, then $a_{k-1}\leq 2(k-2)+1$. Otherwise, if $a_{k-1}\geq 2k-2$, then by $(\ref{2.1})$ and $k\geq 5$, we have $$|hA|\geq k+(h-2)(2k-2)+2k-3>hk+2h-3,$$
which is impossible. Thus, again by (\ref{2.1}) we have
$$hk+2h-3\geq |hA|\geq k+(h-1)a_{k-1},$$
hence $a_{k-1}\leq k+1$.

If $|hA|\leq hk+h-2$, then by the above discussion, we have $a_{k-1}\leq 2(k-2)+1$. Thus, by (\ref{2.1}) we have
$hk+h-2\geq |hA|\geq k+(h-1)a_{k-1},$
hence $a_{k-1}\leq k$.

This completes the proof of Lemma \ref{lem1}.
\end{proof}

\begin{lem}\label{lem2} Let $i,j$ be positive integers such that $i\geq 2$ and $j\geq i+2$. Put $A=[0,i-1]\cup [i+1,j]$. Then
$hA=[0,hj]$ for all $h\geq 2$.
\end{lem}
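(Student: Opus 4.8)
The plan is to show that $hA = [0, hj]$ by establishing the reverse inclusion $[0, hj] \subseteq hA$, since $hA \subseteq [0, hj]$ is immediate from $A \subseteq [0,j]$ and $0, j \in A$. The set $A = [0,i-1] \cup [i+1,j]$ is an interval with a single gap at $i$; the hypotheses $i \geq 2$ and $j \geq i+2$ guarantee that both $\{0,1\} \subseteq A$ and $\{i-1, i+1, j\} \subseteq A$, so $A$ contains two consecutive integers near the bottom and straddles the gap. The key observation is that because $h \geq 2$, we have at least two summands to play with, and the gap at $i$ can always be "filled" by writing $i = (i-1) + 1$ using two elements of $A$.

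First I would argue that every integer $n$ with $0 \leq n \leq hj$ can be written as a sum of $h$ elements of $A$. Write $n = qj + r$ with $0 \leq r < j$ (or handle $n = hj$ separately as $j + j + \cdots + j$). Tentatively represent $n$ using $q$ copies of $j$, one summand equal to $r$, and $h - q - 1$ copies of $0$; this works as long as $0 \leq q \leq h-1$ and $r \in A$, i.e. $r \neq i$. The only obstruction is when the "remainder" summand would have to equal $i$. In that case, since $i \geq 2$, replace the summand $i$ by the two summands $i-1$ and $1$, both of which lie in $A$; to keep the total number of summands equal to $h$, this requires that we had at least one spare $0$ available, i.e. $h - q - 1 \geq 1$. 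When $q \leq h - 2$ this is fine. The remaining boundary case is $q = h-1$ and $r = i$, i.e. $n = (h-1)j + i$; here I would instead write $n = (h-2)j + (j + i)$ and note $j + i = (i+1) + (j-1)$ or, more simply, use that $j - 1 \geq i + 1 > i$, so $j + i$ with $j-1, i+1 \in A$ gives a valid two-element representation, consuming exactly two of the $h$ summands and leaving $h-2$ copies of $j$. One should double-check the small values $q = 0$ and the interaction with $j \geq i+2$ (so that $j - 1 \geq i+1$, keeping $i+1 \leq j$, hence $i+1 \in A$).

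A cleaner alternative, which I would probably adopt for the write-up, is an induction on $h$. The base case $h = 2$ asks us to show $2A = [0, 2j]$: given $n \in [0, 2j]$, if $n \leq j$ write $n = n + 0$ unless $n = i$, in which case $n = (i-1) + 1$; if $n > j$ write $n = j + (n - j)$ unless $n - j = i$, i.e. $n = j + i$, in which case $n = (j-1) + (i+1)$, legitimate since $i - 1 \geq 1$ and $i + 1 \leq j$. For the inductive step, $hA = (h-1)A + A \supseteq [0,(h-1)j] + \{0, j\} = [0, (h-1)j] \cup [j, hj] = [0, hj]$, using the inductive hypothesis and $0, j \in A$.

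The main obstacle is purely bookkeeping: making sure the single gap at $i$ never forces an invalid summand, and in the few boundary configurations ($n$ near a multiple of $j$ plus $i$) that there is always a neighboring pair in $A$ summing to the needed value. The hypotheses $i \geq 2$ (so $i - 1 \geq 1$, giving $1, i-1 \in A$ and $1 \neq i$) and $j \geq i + 2$ (so $i + 1 \leq j - 1$, giving $i+1, j-1 \in A$) are exactly what is needed to cover these cases, and I expect no difficulty beyond checking them.
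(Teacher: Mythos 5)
Your proposal is correct, but it proceeds differently from the paper. The paper's proof is ``local'': it first observes that summing within each of the two blocks gives $[0,hi-h]\cup[hi+h,hj]\subset hA$, and then bridges the remaining gap by computing $h(A_1\cup A_2)=[hi-2h,hi+2h]$ for the four-element set $A_1\cup A_2=\{i-2,i-1\}\cup\{i+1,i+2\}$ (the hypotheses $i\geq 2$ and $j\geq i+2$ are exactly what puts this set inside $A$, and $h\geq 2$ is what makes the $h+1$ subintervals $[hi-2h+3l,\,hi-h+3l]$ overlap). Your second argument --- induction on $h$ with base case $2A=[0,2j]$ and inductive step $hA\supseteq[0,(h-1)j]+\{0,j\}=[0,hj]$ --- is cleaner and shorter; the only content is the base case, where your handling of the two exceptional values $n=i$ via $(i-1)+1$ and $n=j+i$ via $(j-1)+(i+1)$ uses the hypotheses in the same essential way. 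Your first (greedy $n=qj+r$) argument also works once the boundary case $q=h-1$, $r=i$ is patched as you describe. One practical advantage of the paper's formulation is that the explicit inclusion $[hj-2h,hj+2h]\subset hA$ extracted from the computation of $h(A_1\cup A_2)$ is reused verbatim in the proofs of Propositions 3.4(3) and 3.4(5), whereas your induction yields only the final statement $hA=[0,hj]$; this is a matter of packaging rather than correctness.
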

\begin{proof}We have
\begin{equation}{\label{2.3}}[0,hi-h]\cup [hi+h,hj]\subset hA.\end{equation}
Write $$A_1=\{i-2,i-1\}, \quad A_2=\{i+1,i+2\}.$$ Since $i\geq 2$ and $j\geq i+2$, we have $A_1\cup A_2\subset A$.

For $h\geq 2$, we have $hi-h+3l+1\geq hi-2h+3(l+1)$ for all $0\leq l\leq h$. Thus
\begin{eqnarray}{\label{2.4}}
h(A_1\cup A_2)&=&\bigcup\limits_{l=0}^h\left((h-l)A_1+l A_2\right)\nonumber\\
&=&\bigcup\limits_{l=0}^h\big([(i-2)(h-l),(i-1)(h-l)]+[l(i+1),l(i+2)]\big)\nonumber\\
&=&\bigcup\limits_{l=0}^h[hi-2h+3l,hi-h+3l]\\
&=& [hi-2h,hi+2h].\nonumber
\end{eqnarray}
By (\ref{2.3}) and (\ref{2.4}), we have $hA=[0,hj]$.

This completes the proof of Lemma \ref{lem2}.
\end{proof}

\begin{lem}\label{lem3} Let $i,j$ be positive integers such that $i\geq 2$ and $j\geq i+3$. Put $A=[0,i-1]\cup [i+2,j]$. Then
$hA=[0,hj]$ for all $h\geq 3$.
\end{lem}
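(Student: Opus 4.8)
The plan is to imitate the structure of the proof of Lemma~\ref{lem2}. The gap in $A$ now has length $2$ (the two missing values $i$ and $i+1$), so the obvious stacking of blocks $[0,hi-h]\cup[hi+2h,hj]$ leaves a window of length $3h-1$ to be filled, namely the interval $[hi-h+1,hi+2h-1]$. The first step is to record this containment:
\[
[0,hi-h]\cup[hi+2h,hj]\subset hA,
\]
which holds since $[0,i-1]$ and $[i+2,j]$ are each intervals of $A$ and $i-1\ge 1$, $j-(i+2)\ge 1$. It remains to produce every integer in $[hi-h+1,hi+2h-1]$ as a sum of $h$ elements of $A$.

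The second step is to isolate a small sub-configuration of $A$ that already generates a long interval around $hi$. Since $i\ge 2$ and $j\ge i+3$, we may take $A_1=\{i-2,i-1\}\subset A$ and $A_2=\{i+2,i+3\}\subset A$; note this is where $h\ge 3$ will be needed, because the jump from $A_1$ to $A_2$ is now $4$ rather than $3$, so consecutive translates overlap only once there are enough summands. As in (\ref{2.4}), compute
\[
h(A_1\cup A_2)=\bigcup_{l=0}^{h}\bigl((h-l)A_1+lA_2\bigr)
=\bigcup_{l=0}^{h}[hi-2h+4l,\,hi-h+4l].
\]
The block indexed by $l$ and the block indexed by $l+1$ together cover a run provided $hi-h+4l+1\ge hi-2h+4(l+1)$, i.e. $h\ge 3$; under that hypothesis the union telescopes to $[hi-2h,\,hi+3h]$, which comfortably contains the missing window $[hi-h+1,hi+2h-1]$.

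Combining the two steps gives $[0,hj]\subset hA$, and since trivially $hA\subset[0,hj]$, equality follows. The only point requiring care is the verification that $h\ge 3$ is exactly what makes the $l$- and $(l+1)$-blocks in $h(A_1\cup A_2)$ overlap; for $h=2$ the two extreme blocks ($l=0$ and $l=2$) are $[2i-4,2i-2]$ and $[2i+4,2i+6]$, with $[2i-1,2i+3]$ genuinely absent, so the statement really does fail for $h=2$, and no stronger conclusion is available. I expect this overlap bookkeeping — and checking that the endpoints $hi-2h$ and $hi+3h$ of the generated interval sit inside the already-covered blocks $[0,hi-h]$ and $[hi+2h,hj]$ (which uses $i\ge 2$ and $j\ge i+3$ respectively) — to be the only mild obstacle; everything else is the same bookkeeping as in Lemma~\ref{lem2}.
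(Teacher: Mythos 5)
Your proposal is correct and follows essentially the same route as the paper: the same containment $[0,hi-h]\cup[hi+2h,hj]\subset hA$, the same auxiliary pair $A_1=\{i-2,i-1\}$, $A_2=\{i+2,i+3\}$, and the same overlap inequality $hi-h+4l+1\geq hi-2h+4(l+1)$, which is exactly the condition $h\geq 3$. Your additional remarks on why $h=2$ fails and on the endpoint checks using $i\geq 2$ and $j\geq i+3$ are accurate but not needed beyond what the paper records.
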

\begin{proof}We have
\begin{equation}{\label{2.5}}[0,hi-h]\cup [hi+2h,hj]\subset hA.\end{equation}
Write $$A_1=\{i-2,i-1\}, \quad A_2=\{i+2,i+3\}.$$ Since $i\geq 2$ and $j\geq i+3$, we have $A_1\cup A_2\subset A$.

For $h\geq 3$, we have $hi-h+4l+1\geq hi-2h+4(l+1)$ for all $0\leq l\leq h$. Thus
\begin{eqnarray}{\label{2.6}}
h(A_1\cup A_2)&=&\bigcup\limits_{l=0}^h\left((h-l)A_1+l A_2\right)\nonumber\\
&=&\bigcup\limits_{l=0}^h\big([(i-2)(h-l),(i-1)(h-l)]+[l(i+2),l(i+3)]\big)\nonumber\\
&=&\bigcup\limits_{l=0}^h[hi-2h+4l,hi-h+4l]\\
&=& [hi-2h,hi+3h].\nonumber
\end{eqnarray}
By (\ref{2.5}) and (\ref{2.6}), we have $hA=[0,hj]$.

This completes the proof of Lemma \ref{lem3}.
\end{proof}

\section{Propositions}
\begin{prop}\label{prop1} Let $h\geq 2$, $k\geq 4$ be positive integers and $A^{(N)}=[0,k]\backslash\{i\}$ for some $i\in[1,k]$. Then

\noindent(1) If $i=k$, then $\left|hA^{(N)}\right|=hk-h+1$;

\noindent(2) If $i=1$ or $k-1$, then $\left|hA^{(N)}\right|=hk$;

\noindent(3) If $2\leq i\leq k-2$, then $\left|hA^{(N)}\right|=hk+1$.
\end{prop}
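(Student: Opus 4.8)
The plan is to treat the three cases separately, leaning on Lemma \ref{lem2} for the generic middle case and handling the two boundary cases by direct computation.

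For part (1), when $i=k$ the set is $A^{(N)}=[0,k-1]$, a $k$-term arithmetic progression with common difference $1$; hence $hA^{(N)}=[0,h(k-1)]$ and $|hA^{(N)}|=hk-h+1$, either by direct inspection or by Theorem B. For part (3), when $2\leq i\leq k-2$ we have $A^{(N)}=[0,i-1]\cup[i+1,k]$, which is exactly the set appearing in Lemma \ref{lem2} with $j=k$: the hypotheses $i\geq 2$ and $j=k\geq i+2$ hold, so Lemma \ref{lem2} gives $hA^{(N)}=[0,hk]$ and therefore $|hA^{(N)}|=hk+1$.

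For part (2), I would split into $i=1$ and $i=k-1$. When $i=1$, $A^{(N)}=\{0\}\cup[2,k]$; grouping a sum of $h$ elements according to the number $m$ of nonzero summands, the sums with exactly $m$ nonzero terms sweep out $m[2,k]=[2m,mk]$ for $1\leq m\leq h$, together with $\{0\}$ for $m=0$, so $hA^{(N)}=\{0\}\cup\bigcup_{m=1}^{h}[2m,mk]$. Since $2(m+1)\leq mk+1$ for every $m\geq 1$ when $k\geq 3$, consecutive intervals overlap, so the union equals $[2,hk]$; as the least positive element of $hA^{(N)}$ is $2$, we get $hA^{(N)}=\{0\}\cup[2,hk]$ and $|hA^{(N)}|=hk$. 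When $i=k-1$, note that $k-A^{(N)}=k-\bigl([0,k-2]\cup\{k\}\bigr)=\{0\}\cup[2,k]$ is precisely the $i=1$ set; since $h(k-A^{(N)})=hk-hA^{(N)}$ is a reflection-translate of $hA^{(N)}$, we conclude $|hA^{(N)}|=|h(k-A^{(N)})|=hk$ from the previous computation.

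The arguments are all elementary; the only place calling for a little care is the telescoping of the intervals $[2m,mk]$ in case (2) — and the analogous fact already packaged inside Lemma \ref{lem2} for case (3) — namely verifying that the overlap conditions hold for every admissible $m$ and for the smallest allowed $k$ and $h$. I do not expect any genuine obstacle beyond this bookkeeping.
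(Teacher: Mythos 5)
Your proposal is correct and follows essentially the same route as the paper: Theorem B for $i=k$, Lemma \ref{lem2} for $2\leq i\leq k-2$, and a direct computation plus the reflection $k-A^{(N)}$ for $i=1$ and $i=k-1$. The only difference is cosmetic: in case (2) you organize the sumset as $\{0\}\cup\bigcup_{m=1}^{h}[2m,mk]$ and telescope the intervals, whereas the paper exhibits explicit representations of each $m\in[2,2h-1]$; both verify the same identity $hA^{(N)}=\{0\}\cup[2,hk]$.
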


\begin{proof} (1) If $A^{(N)}=[0,k-1]$, then by Theorem B, we have $\left|hA^{(N)}\right|=hk-h+1$.

(2) If $i=1$, then $A^{(N)}=\{0\}\cup [2,k].$ We have $$1\notin hA^{(N)}, \;\{0\}\cup [2h,hk]\subset hA^{(N)}.$$
For $2\leq m\leq 2h-1$, let $r_m$ be the least nonnegative residue of $m$ modulo $2$, we have $2+r_m\in A^{(N)}$ and
\begin{equation*}
  \begin{array}{lrrrl}
  m=&\underbrace{2+\cdots +2}+&\underbrace{0+\cdots+0}+(2+r_m). \\
  & \multicolumn{1}{c}{\frac{m-r_m}{2}-1 \text{ copies}}&  \multicolumn{2}{c}{ h-\frac{m-r_m}{2} \text{ copies}}
  \end{array}
\end{equation*}
Hence, we have $|hA^{(N)}|=hk$.

If $i=k-1$, then by $$A^{(N)}=[0, k-2]\cup \{k\}=k-(\{0\}\cup [2,k]),$$
we have $\left|hA^{(N)}\right|=hk$.

(3) If $2\leq i\leq k-2$, then
$A^{(N)}=[0,i-1]\cup[i+1,k].$
By Lemma \ref{lem2}, we have $hA^{(N)}=[0,hk]$. Thus
$\left|hA^{(N)}\right|=hk+1.$

This completes the proof of Proposition \ref{prop1}.
\end{proof}

\begin{prop}\label{prop2} Let $h\geq 2$, $k\geq 5$ be positive integers and $A^{(N)}=[0,k+1]\backslash\{i,i+1\}$ for some $i\in[1,k]$. Then

\noindent(1) If $i=k$, then $\left|hA^{(N)}\right|=hk-h+1$;

\noindent(2) If $i=1$ or $k-1$, then $\left|hA^{(N)}\right|=hk+h-1$;

\noindent(3) If $2\leq i\leq k-2$, then $\left|hA^{(N)}\right|=hk+h+1$ for $h\geq 3$. For $h=2$ and $i=2$ or $k-2$, we have $\left|2A^{(N)}\right|=2k+2$;
For $h=2$ and $3\leq i\leq k-3$, we have $\left|2A^{(N)}\right|=2k+3$.
\end{prop}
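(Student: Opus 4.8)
The plan is to treat the three ranges of $i$ separately, since in each case the structure of $A^{(N)}=[0,k+1]\setminus\{i,i+1\}$ is quite different and the value of $|hA^{(N)}|$ is computed by directly identifying $hA^{(N)}$ as an interval minus a controlled set of ``gaps.''

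First, for case (1) with $i=k$ we have $A^{(N)}=[0,k-1]$, a $k$-term arithmetic progression, so $|hA^{(N)}|=hk-h+1$ follows immediately from Theorem~B. For case (2), suppose $i=1$, so $A^{(N)}=\{0\}\cup[3,k+1]$. Here $d(A^{(N)})=1$ and the small elements that are missing are $1$ and $2$. I would show $hA^{(N)}=\{0\}\cup\{3,4,\ldots\}$-type set: precisely, $1$ and $2$ are not in $hA^{(N)}$ (any nonzero sum of $h$ elements is at least $3$), while everything in $[3,h(k+1)]$ is hit. The point $3$ is hit as $3+0+\cdots+0$, the point $6$ as $3+3+0+\cdots$, and then $[3l,3l+(k-2)]$ for $1\le l\le h$ covers $[3,h(k+1)]$ once $k$ is large enough, using $3l+(k-2)\ge 3(l+1)-1$, i.e. $k\ge 2$. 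So $|hA^{(N)}|=h(k+1)+1-2=hk+h-1$. The case $i=k-1$ reduces to $i=1$ by the reflection $A^{(N)}\mapsto k+1-A^{(N)}$, which preserves both normal form and $|hA^{(N)}|$.

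For case (3) with $2\le i\le k-2$, we have $A^{(N)}=[0,i-1]\cup[i+2,k+1]$, a set with a ``double gap'' at $i,i+1$ in the middle. When $h\ge3$, Lemma~\ref{lem3} (applied with $j=k+1$, noting $i\ge2$ and $k+1\ge i+3$ since $i\le k-2$) gives $hA^{(N)}=[0,h(k+1)]$, hence $|hA^{(N)}|=hk+h+1$. When $h=2$, Lemma~\ref{lem3} no longer applies, and one must compute $2A^{(N)}$ by hand. Writing $2A^{(N)}=2[0,i-1]\cup\big([0,i-1]+[i+2,k+1]\big)\cup 2[i+2,k+1]$, the three pieces are $[0,2i-2]$, $[i+2,k+i]$, and $[2i+4,2k+2]$. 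Their union is $[0,2k+2]$ minus whatever points in $[2i-1,2i+3]$ fail to be covered by the middle block $[i+2,k+i]$. For $3\le i\le k-3$ one checks $i+2\le 2i-1$ and $k+i\ge 2i+3$, so the middle block already covers all of $[2i-1,2i+3]$ and $2A^{(N)}=[0,2k+2]$ except possibly for the single value $2i+1$... wait — more carefully, $2i+1\notin 2A^{(N)}$ unless it lies in one of the three blocks, and for $3\le i\le k-3$ it lies in $[i+2,k+i]$, so actually the union is all of $[0,2k+2]$? I must recheck: the genuine obstructions are that the middle block is $[i+2,k+i]$, which for $i=2$ starts at $4=2i$ but we need to reach down to $2i-1=3$; and it is exactly this boundary bookkeeping near $i\in\{2,k-2\}$ that produces the smaller value $2k+2$ (one missing point, namely $2i+1=5$ resp. $2i+1=2k-3$) versus the generic $2k+3$.

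The main obstacle is precisely this $h=2$ boundary analysis in part (3): one has to determine exactly which integers in the short window $[2i-1,2i+3]$ between the blocks $2[0,i-1]=[0,2i-2]$ and $2[i+2,k+1]=[2i+4,2k+2]$ are covered by the cross term $[0,i-1]+[i+2,k+1]=[i+2,k+i]$. For $i=2$ the cross term is $[4,k+2]$, missing $3$ from the needed window $\{3,4,5\}$, but $3\notin 2A^{(N)}$ anyway (smallest nonzero sums from $\{0,1,4,5,\ldots\}$ are $0,1,2,4,5$; indeed $3\notin 2A^{(N)}$), so $2A^{(N)}=[0,2k+2]\setminus\{3\}$ has size $2k+2$; symmetrically for $i=k-2$. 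For $3\le i\le k-3$ the cross term $[i+2,k+i]$ contains the whole window $\{2i-1,\ldots,2i+3\}$ since $i+2\le 2i-1$ and $2i+3\le k+i$, giving $2A^{(N)}=[0,2k+2]$ of size $2k+3$. I would organize this as a short case split on $i$, present the three block intervals once, and then verify the two inequalities $i+2\le 2i-1$ (i.e. $i\ge3$) and $2i+3\le k+i$ (i.e. $i\le k-3$) that govern the transition, together with the direct check that the one exceptional point is genuinely absent from $2A^{(N)}$.
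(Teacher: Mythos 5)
Your proposal follows essentially the same route as the paper: Theorem~B for $i=k$; a direct covering argument for $i=1$ plus the reflection $x\mapsto k+1-x$ for $i=k-1$; and, for $2\le i\le k-2$, Lemma~\ref{lem3} when $h\ge 3$ together with the explicit three-block decomposition $2A^{(N)}=[0,2i-2]\cup[i+2,k+i]\cup[2i+4,2k+2]$ when $h=2$. Two small numerical slips should be repaired, though neither affects the counts. First, in case (2) the union $\bigcup_{l=1}^{h}[3l,\,3l+(k-2)]$ only covers $[3,\,3h+k-2]$, not all of $[3,h(k+1)]$; you should instead use $\bigcup_{l=1}^{h}\bigl((h-l)\{0\}+l[3,k+1]\bigr)=\bigcup_{l=1}^{h}[3l,\,l(k+1)]$, whose consecutive intervals already overlap for $k\ge 4$ (your ``$k\ge2$'' is also off by the same token). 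Second, the exceptional missing point in part (3) with $h=2$ is not $2i+1$: it is $3=2i-1$ for $i=2$ and $2k-1=2i+3$ for $i=k-2$ (as your own final computation for $i=2$ shows), so the parenthetical identification mid-argument should be corrected even though the cardinality $2k+2$ is right.
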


\begin{proof} (1) If $A^{(N)}=[0,k-1]$, then by Theorem B, we have $\left|hA^{(N)}\right|=hk-h+1$.

(2) If $A^{(N)}=\{0\}\cup[3,k+1]$, then $$1,2\notin hA^{(N)}, \;\{0\}\cup [3h,hk+h]\subset hA^{(N)}.$$
For $3\leq m\leq3h-1$, let $r_m$ be the least nonnegative residue of $m$ modulo $3$. Noting that $r_m+3\in A^{(N)}$ and $\displaystyle 1\leq \left\lfloor \frac{m-r_m}{3}\right\rfloor\leq h-1$, we have
\begin{equation*}
  \begin{array}{lrrrl}
 m=&\underbrace{3+\cdots +3}+&(3+r_m)+&\underbrace{0+\cdots+0}.\\
 & \multicolumn{1}{c}{\frac{m-r_m}{3}-1 \text{ copies}}&  &\multicolumn{2}{c}{h-\frac{m-r_m}{3} \text{ copies}}
  \end{array}
\end{equation*}
 Hence, $|hA^{(N)}|=hk+h-1$.

 If $A^{(N)}=[0,k-2]\cup \{k+1\}$, then by $$A^{(N)}=(k+1)-\left(\{0\}\cup[3,k+1]\right),$$ we have $|hA^{(N)}|=hk+h-1$.

(3) If $2\leq i\leq k-2$, then $$A^{(N)}=[0,i-1]\cup [i+2,k+1].$$

If $h\geq 3$, then by Lemma \ref{lem3} we have $A^{(N)}=[0,hk+h]$, thus $|hA^{(N)}|=hk+h+1$.

If $i=2$ and $h=2$, then $|2A^{(N)}|= 2k+2$. If $3\leq i\leq k-2$ and $h=2$, then
$$2A^{(N)}=[0,2i-2]\cup [i+2,k+i]\cup [2i+4,2k+2].$$
If $i\leq k-3$, then $2A^{(N)}=[0,2k+2]$; if $i=k-2$, then $2A^{(N)}=[0,2k-2]\cup [2k,2k+2]$.
Hence, $|2A^{(N)}|=2k+2$ or $2k+3$.

This completes the proof of Proposition \ref{prop2}.
\end{proof}

\begin{prop}\label{prop3} Let $h\geq 2$, $k\geq 5$ be positive integers and $A^{(N)}=[0,k+1]\backslash\{i,i+2\}$ for some $i\in[1,k-1]$. Then

\noindent(1) If $i=k-1$, then $\left|hA^{(N)}\right|=hk$;

\noindent(2) If $i=1$ or $k-2$, then $\left|hA^{(N)}\right|=hk+h-1$;

\noindent(3) If $2\leq i\leq k-3$, then $\left|hA^{(N)}\right|=hk+h+1$.

\end{prop}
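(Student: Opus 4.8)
The plan is to mirror the structure of the proofs of Propositions \ref{prop1} and \ref{prop2}, splitting into the three cases according to the position of $i$, and in each case first exhibiting a short certificate for the small sums that are \emph{not} reachable and a representation argument for the small sums that \emph{are} reachable, then invoking Lemma \ref{lem3} (or an ad hoc computation when $h$ is small) to pin down the rest of the interval. Concretely, for $A^{(N)}=[0,k+1]\setminus\{i,i+2\}$ the ``bulk'' of $hA^{(N)}$ should be an interval $[0,hk+h]$ with a small number of extra omissions near the bottom (and, by the reflection symmetry $x\mapsto(k+1)-x$, near the top), so the whole computation reduces to understanding which of $0,1,2,\dots$ (and symmetrically $hk+h,hk+h-1,\dots$) lie in $hA^{(N)}$.

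For part (1), $i=k-1$ gives $A^{(N)}=[0,k-2]\cup\{k+1\}$, which is (a translate/dilate away from) the set $\{0\}\cup[2,k]$ already handled in Proposition \ref{prop1}(2): indeed $[0,k-2]\cup\{k+1\}=(k+1)-\bigl(\{0\}\cup[3,k+1]\bigr)$ is not quite it, so more honestly one notes $A^{(N)}\supset[0,k-2]$ generates $[0,h(k-2)]$, and the only other element $k+1$ can only be used to jump, so $|hA^{(N)}|=h(k-2)+1+(\text{extra hits from using }k+1)$; a direct count of which residues get filled gives $hk$. For part (2), take $i=1$, so $A^{(N)}=\{0\}\cup[2,k+1]\setminus\{3\}$ — here I would check that $1$ and $3$ (and hence $3,\ldots$?) are the missing small values: $1\notin hA^{(N)}$ obviously, $3\notin hA^{(N)}$ since the only summands $\le 3$ are $0$ and $2$; meanwhile $2=2+0+\cdots$, $4=2+2+0+\cdots$, $5=5+0+\cdots$, and $[2h,hk+h]$ is covered because $[2,k+1]\setminus\{3\}$ still has two consecutive elements ($4,5$, and also $k,k+1$) so Lemma \ref{lem2}-type reasoning applies on the shifted set. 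Then $|hA^{(N)}|=(hk+h+1)-|\{1,3\}|=hk+h-1$, and the case $i=k-2$ follows by the reflection $x\mapsto(k+1)-x$.

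For part (3), $2\le i\le k-3$, we have $A^{(N)}=[0,i-1]\cup[i+1,i+1]\cup[i+3,k+1]$, i.e. $[0,k+1]$ with two elements removed but no two \emph{consecutive} elements removed, and with $j=k+1\ge i+3$ so there are two consecutive elements $i-2,i-1$ below the first gap and two consecutive elements $i+3,i+4$ (or near the top, $k,k+1$) available. For $h\ge 3$ I would split the ``block'' $[i-2,i-1]\cup\{i+1\}\cup[i+3,i+4]$ and run the same telescoping union of $(h-l)$-fold and $l$-fold pieces as in Lemma \ref{lem3}, the point being that the single element $i+1$ between the gaps lets the covered sub-intervals overlap, so $hA^{(N)}=[0,hk+h]$ and $|hA^{(N)}|=hk+h+1$. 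The main obstacle — and the step I would be most careful with — is the small-$h$ boundary behaviour: for $h=2$ one cannot appeal to Lemma \ref{lem3}, so I would compute $2A^{(N)}$ directly as $[0,2i-2]\cup[i+1,k+i]\cup[i+3,k+i+2]\cup[2i+6,2k+2]$ and check these four intervals chain together (which they do, given $2\le i\le k-3$ and $k\ge5$, so that $i+1\le 2i-1$ and $2i+6\le k+i+3$) to still give $[0,2k+2]$; one must verify no isolated point is lost at either end, exactly as in the $h=2$ subcase of Proposition \ref{prop2}(3). If any edge value $i$ (such as $i=2$ or $i=k-3$) behaves differently for $h=2$, it would be flagged here, but the interval arithmetic should show it does not.
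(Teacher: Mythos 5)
There is a genuine error in your part (1): you have misidentified the set. For $i=k-1$ the removed elements are $k-1$ and $i+2=k+1$, so $A^{(N)}=[0,k+1]\setminus\{k-1,k+1\}=[0,k-2]\cup\{k\}$, not $[0,k-2]\cup\{k+1\}$. This is not a harmless slip, because your subsequent ``direct count'' is then carried out for the wrong set and its conclusion is false for that set: one checks that
$h\bigl([0,k-2]\cup\{k+1\}\bigr)=\bigcup_{l=0}^{h}\bigl[l(k+1),\,l(k+1)+(h-l)(k-2)\bigr]=[0,h(k+1)-3]\cup\{h(k+1)\}$,
which has cardinality $hk+h-1$, not $hk$ (e.g.\ $h=2$, $k=5$ gives $[0,9]\cup\{12\}$, of size $11\neq 10$). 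With the correct set the case is immediate and needs no count at all: $[0,k-2]\cup\{k\}=[0,k]\setminus\{k-1\}$, so Proposition \ref{prop1}(2) gives $|hA^{(N)}|=hk$ directly, which is exactly what the paper does.

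Parts (2) and (3) follow the right track. In (2) you correctly isolate $1$ and $3$ as the only elements of $[0,hk+h]$ missing from $hA^{(N)}$; the coverage of the remaining values is asserted rather than proved (the paper fills $[4,4h-1]$ with explicit representations modulo $4$, and your claim that ``$[2h,hk+h]$ is covered by Lemma \ref{lem2}-type reasoning on the shifted set'' would need to be spelled out since $\{2\}\cup[4,k+1]$ does not contain $0$), but this is routine. In (3) your $h=2$ computation of $2A^{(N)}$ as a union of intervals chaining to $[0,2k+2]$ is correct under $2\leq i\leq k-3$, $k\geq 5$; for $h\geq 3$ your three-cluster telescoping around $\{i-2,i-1\}\cup\{i+1\}\cup\{i+3,i+4\}$ is only sketched, whereas the paper writes explicit representations of $h(i-1)+m$ and $h(i+1)+m$ for $1\leq m\leq 2h-1$. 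A quick way to close your version is to note that once $2A^{(N)}=[0,2(k+1)]$ is established, $hA^{(N)}\supseteq 2A^{(N)}+(h-2)\{0,k+1\}=[0,h(k+1)]$ for all $h\geq 3$, since consecutive translates $[l(k+1),l(k+1)+2(k+1)]$ overlap. So: fix the set in (1), and supply the omitted coverage arguments in (2) and (3).
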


\begin{proof} (1) If $i=k-1$, then $A^{(N)}=[0,k-2]\cup\{k\}$. By Proposition \ref{prop1}(2), we have $|hA^{(N)}|=hk$.

(2) If $i=1$, then $A^{(N)}=\{0\}\cup\{2\}\cup[4,k+1]$. We have $$1,3\notin hA^{(N)}, \{0,2\}\cup [4h,hk+h]\subset hA^{(N)}.$$

For $4\leq m\leq 4h-1$, let $r_m$ be the least nonnegative residue of $m$ modulo $4$. Then $\displaystyle 1\leq \left\lfloor \frac{m-r_m}{4}\right\rfloor\leq h-1$.
If $r_m=0$ or $1$, then
\begin{equation*}
  \begin{array}{lrrrl}
  m=&\underbrace{4+\cdots +4}+&\underbrace{0+\cdots+0}+(4+r_m). \\
  & \multicolumn{1}{c}{\frac{m-r_m}{4}-1 \text{ copies}}&  \multicolumn{2}{c}{h-\frac{m-r_m}{4} \text{ copies}}
  \end{array}
\end{equation*}
If $r_m=2$ or $3$, then
\begin{equation*}
  \begin{array}{lrrrl}
  m=&\underbrace{4+\cdots +4}+&\underbrace{0+\cdots+0}+2+(2+r_m). \\
  & \multicolumn{1}{c}{\frac{m-r_m}{4}-1 \text{ copies}}&  \multicolumn{2}{c}{h-\frac{m-r_m}{4}-1 \text{ copies}}
  \end{array}
\end{equation*}
Hence, $|hA^{(N)}|=hk+h-1$.

If $i=k-2$, then $$A^{(N)}=(k+1)-\left(\{0\}\cup\{2\}\cup[4,k+1]\right).$$ Thus $|hA^{(N)}|=hk+h-1$.

(3) If $2\leq i\leq k-3$, then $A^{(N)}=[0,i-1]\cup\{i+1\}\cup[i+3,k+1].$
Thus $$[0,hi-h]\cup \{hi+h\}\cup \{hi+3h,hk+h\}\subset hA^{(N)}.$$

Now we shall show that $h(i-1)+m, h(i+1)+m\in hA^{(N)}$ for $1\leq m\leq 2h-1$.
For $m=1$ we have
\begin{equation*}
  \begin{array}{rrrr}
  &h(i-1)+1=\underbrace{(i-1)+\cdots +(i-1)}+(i-2)+(i+1), \\
  & \multicolumn{1}{c}{\quad h-2 \text{ copies}}
  \end{array}
\end{equation*}
\begin{equation*}
  \begin{array}{rrrr}
  &h(i+1)+1=\underbrace{(i+1)+\cdots +(i+1)}+(i-1)+(i+4). \\
  & \multicolumn{1}{c}{\quad  h-2 \text{ copies}}
  \end{array}
\end{equation*}
For $2\leq m\leq 2h-1$, let $r_m$ be the least nonnegative residue of $m$ modulo $2$. Then $\displaystyle 1\leq \left\lfloor \frac{m-r_m}{2}\right\rfloor\leq h-1$, we have
\begin{equation*}
\hskip -3cm  \begin{array}{rrrr}
  &h(i-1)+m=\underbrace{(i-1)+\cdots +(i-1)}+(i-1-r_m)+&\underbrace{(i+1)+\cdots+(i+1)}+(i+1+2r_m), \\
  & \multicolumn{1}{c}{\quad\quad \quad\quad h-1-\frac{m-r_m}{2}\text{ copies}}&  \multicolumn{2}{c}{ \quad\quad\frac{m-r_m}{2}-1 \text{ copies}}
  \end{array}
\end{equation*}
\begin{equation*}
\hskip -3.5cm   \begin{array}{lrrrl}
  h(i+1)+m=&\underbrace{(i+1)+\cdots +(i+1)}+&\underbrace{(i+3)+\cdots+(i+3)}+(i+3+r_m), \\
  & \multicolumn{1}{c}{h-\frac{m-r_m}{2} \text{ copies}}&  \multicolumn{2}{c}{\frac{m-r_m}{2}-1 \text{ copies}}
  \end{array}
\end{equation*}

Hence, $|hA^{(N)}|=hk+h+1$.

This completes the proof of Proposition \ref{prop3}.
\end{proof}

\begin{prop}\label{prop4} Let $h\geq 2$, $k\geq 5$ be positive integers and $A^{(N)}=[0,k+1]\backslash\{i,j\}$ for some $i\in[1,k-2], j\geq i+3$.

\noindent(1) If $i=1$ and $j=k+1$, then $\left|hA^{(N)}\right|=hk$;

\noindent(2) If $i=1$ and $j=k$, then $\left|hA^{(N)}\right|=hk+h-1$;

\noindent(3) If $i=1$, $4\leq j\leq k-1$; or $2\leq i\leq k-3$, $j=k$, then $\left|hA^{(N)}\right|=hk+h$;

\noindent(4) If $i=k-2$; or $2\leq i\leq k-3$, $j=k+1$, then $\left|hA^{(N)}\right|=hk+1$.

\noindent(5) If $2\leq i\leq k-3$ and $j\leq k-1$, then $\left|hA^{(N)}\right|=hk+h+1$.

\end{prop}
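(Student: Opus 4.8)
The plan is to run through the five cases of the statement, reducing to Proposition \ref{prop1} or Lemma \ref{lem2} whenever possible and computing $hA^{(N)}$ directly otherwise; throughout, the reflection $A\mapsto c-A$ with $c=\max A$ (which preserves $|hA|$) is used to collapse symmetric subcases. The organising remark is that in cases (2), (3) and (5) one has $\max A^{(N)}=k+1$, hence $hA^{(N)}\subseteq[0,hk+h]$, an interval of $hk+h+1$ integers; since the asserted cardinalities there are $hk+h+1$, $hk+h$ and $hk+h-1$, it suffices to pin down the zero, one or two missing integers and to check that every other integer of $[0,hk+h]$ is a sum of $h$ elements of $A^{(N)}$.

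Cases (1) and (4) reduce at once to Proposition \ref{prop1}: in (1), $A^{(N)}=[0,k]\setminus\{1\}$, giving $hk$; in (4), when $2\le i\le k-3$ and $j=k+1$ one has $A^{(N)}=[0,k]\setminus\{i\}$, and when $i=k-2$ (so $j=k+1$) the reflection $k-A^{(N)}$ is $[0,k]\setminus\{2\}$, so Proposition \ref{prop1}(3) gives $hk+1$. For case (5), where $A^{(N)}=[0,i-1]\cup[i+1,j-1]\cup[j+1,k+1]$, I will show $hA^{(N)}=[0,hk+h]$: Lemma \ref{lem2} applied to $B=[0,j-1]\setminus\{i\}$ gives $[0,hj-h]\subseteq hA^{(N)}$; one has $h[j+1,k+1]=[hj+h,hk+h]$; and Lemma \ref{lem2} applied to the four-element set $\{j-2,j-1,j+1,j+2\}\subseteq A^{(N)}$, a translate of $[0,1]\cup[3,4]$, gives $[hj-2h,hj+2h]\subseteq hA^{(N)}$; these three intervals cover $[0,hk+h]$.

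Case (2) and the subcase $2\le i\le k-3$, $j=k$ of case (3) (to which the subcase $i=1$, $4\le j\le k-1$ reduces by $A\mapsto(k+1)-A$) are the substantive ones. Here the element $k+1$ of $A^{(N)}$ is isolated, the next element down being $k-1$; this forces $hk+h-1=h(k+1)-1\notin hA^{(N)}$, since a sum of $h$ elements equal to $h(k+1)$ uses only copies of $k+1$ and any other sum is at most $(h-1)(k+1)+(k-1)=hk+h-2$, and in case (2) the jump from $0$ to $2$ at the bottom likewise gives $1\notin hA^{(N)}$. For the reverse inclusion, set $B=[0,k-1]\setminus\{i\}$ in case (3) and $B=[0,k-1]\setminus\{1\}$ in case (2); then $hB=[0,h(k-1)]$ by Lemma \ref{lem2}, respectively $hB=\{0\}\cup[2,h(k-1)]$ by Proposition \ref{prop1}(2). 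Adjoining $l$ copies of $k+1$ to an $(h-l)$-fold sum from $B$, for $l=0,1,\ldots,h$, produces a block that runs from near $l(k+1)$ up to $hk-h+2l$ — with $l=h$ contributing only the isolated point $hk+h$ — and one checks (possibly via explicit sum representations in the style of the earlier propositions) that consecutive blocks overlap, so their union is $[0,hk+h]$ with exactly the missing integers deleted. This yields $|hA^{(N)}|=hk+h$ in case (3) and $hk+h-1$ in case (2).

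The main obstacle is the last point: verifying that the blocks obtained by inserting copies of $k+1$ actually chain into a single interval, and that no integer of $[0,hk+h]$ outside the asserted gaps is omitted — so the inclusion is an equality. This is where the hypothesis $k\ge5$ is consumed (it is what makes Lemma \ref{lem2} and Proposition \ref{prop1}(2) applicable to the subsets $[0,k-1]\setminus\{i\}$ and $[0,k-1]\setminus\{1\}$, and what makes consecutive blocks overlap by enough), and handling the extreme count $l=h$, which contributes only the single point $hk+h$ and thereby creates the gap at $hk+h-1$, requires a little care.
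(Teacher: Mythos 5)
Your proposal is correct and follows essentially the same route as the paper: cases (1) and (4) reduce to Proposition \ref{prop1} and Lemma \ref{lem2}, case (5) is covered by gluing $[0,hj-h]$, $[hj-2h,hj+2h]$ and $[hj+h,hk+h]$, and in cases (2)--(3) the isolated element $k+1$ forces the gap at $hk+h-1$ while the rest of $[0,hk+h]$ is filled by sums using $l$ copies of $k+1$ (the paper writes these out as explicit representations of $h(k-1)+m$ for $1\le m\le 2h-2$ rather than as chaining blocks, and treats the subcase $i=1$, $4\le j\le k-1$ directly instead of by your reflection, but the content is the same). The block-overlap verification you defer is routine and does consume $k\ge 5$ exactly as you indicate.
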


\begin{proof} (1) If $i=1$ and $j=k+1$, then $A^{(N)}=\{0\}\cup[2,k]$. By Proposition \ref{prop1}(2), we have $\left|hA^{(N)}\right|=hk$.

(2) If $i=1$ and $j=k$, then $A^{(N)}=\{0\}\cup [2,k-1]\cup \{k+1\}$.
By the proof of Proposition \ref{prop1}(2), we have
$\{0\}\cup [2,hk-h]\subset hA^{(N)}.$

For $1\leq m\leq 2h-2$, let $r_m$ be the least nonnegative residue of $m$ modulo $2$. Then
\begin{equation*}
\hskip -3.6cm \begin{array}{lrrrl}
 h(k-1)+m=&\underbrace{(k-1)+\cdots+(k-1)}+&\underbrace{(k+1)+\cdots+(k+1)}+(k-1-r_m). \\
  & \multicolumn{1}{c}{ h-1-\frac{m+r_m}{2}\text{ copies}}&  \multicolumn{2}{c}{ \quad\frac{m+r_m}{2} \text{ copies}}
  \end{array}
\end{equation*}
Noting that $hk+h-1\not\in hA^{(N)}$, we have $|hA^{(N)}|=hk+h-1$.

(3) If $i=1$ and $4\leq j\leq k-1$, then $$A^{(N)}=\{0\}\cup[2,j-1]\cup[j+1,k+1].$$ By the proof of Proposition \ref{prop1}(2) we have
$$\{0\}\cup [2,hj-h]\subset hA^{(N)}.$$
Noting that $$[j-2,j-1]\cup [j+1,j+2]\subset A^{(N)},$$ by the proof of Lemma \ref{lem2} we have
$[hj-2h,hj+2h]\subset hA^{(N)}$.

 Hence, $|hA^{(N)}|=hk+h$.

If $2\leq i\leq k-3$ and $j=k$, then $$A^{(N)}=[0,i-1]\cup[i+1,k-1]\cup\{k+1\}:=A_1\cup \{k+1\}.$$
By Lemma \ref{lem2}, we have $hA_1=[0,h(k-1)]$. By the proof of Proposition \ref{prop4}(2), we have
$$[hk-h+1,hk+h-2]\subset hA^{(N)}\text{ and } hk+h-1\notin hA^{(N)}.$$
Hence, $|hA^{(N)}|=hk+h$.

(4) If $i=k-2$, then $A^{(N)}=[0,k-3]\cup\{k-1,k\}.$ By Lemma \ref{lem2} we have $|hA^{(N)}|=hk+1$.

If $2\leq i\leq k-3$ and $j=k+1$, then $$A^{(N)}=[0,i-1]\cup[i+1,k].$$
By Lemma \ref{lem2}, we have $hA^{(N)}=[0,hk]$, thus $|hA^{(N)}|=hk+1$.

(5) If $2\leq i\leq k-3$ and $j\leq k-1$, then $$A^{(N)}=[0,i-1]\cup[i+1,j-1]\cup[j+1,k+1].$$
By Lemma \ref{lem2} we have $[0,h(j-1)]\subset hA^{(N)}$.
Noting that $$[j-2,j-1]\cup [j+1,j+2]\subset A^{(N)},$$ by the proof of Lemma \ref{lem2} we have
$[hj-2h,hj+2h]\subset hA^{(N)}$.

Hence, $|hA^{(N)}|=hk+h+1$.

This completes the proof of Proposition \ref{prop4}.
\end{proof}

\section{Proof of Theorem \ref{thm1}}

If $hk-h+1<|hA|\leq hk+h-2$, then by Lemma \ref{2.1}, we have $A^{(N)}=[0,k]\backslash\{i\}$ for some $i\in[1,k]$.
 By Proposition \ref{prop1}, we have $|hA|=hk$ or $|hA|=hk+1$.

Again by Proposition \ref{prop1}, we have $|hA|=hk$ if and only if $A^{(N)}=[0,k]\setminus \{i\}$  with $i=1$ or $k-1$;
 $|hA|=hk+1$ if and only if $A^{(N)}=[0,k]\setminus \{i\}$ with some $2\leq i\leq k-2$.

This completes the proof of Theorem \ref{thm1}.

\section{Proof of Theorem \ref{thm2}}

If $hk+h-2<|hA|\leq hk+2h-3$, then by Lemma \ref{2.1}, we have $A^{(N)}=[0,k+1]\backslash\{i,j\}$ for some $1\leq i<j\leq k+1$.
 By Proposition \ref{prop2}-\ref{prop4}, we have $|hA|=hk+h-1$, $hk+h$ or $|hA|=hk+h+1$.

Again by Proposition \ref{prop1}, we have (a), (b) and (c).

This completes the proof of Theorem \ref{thm2}.
\vskip 1cm
\noindent{\bf Acknowledgment} We are grateful to Professor Yong-Gao Chen for his useful discussion and the anonymous referee for his/her valuable suggestions on improving the previous version of this work.

\end{document}